\documentclass[11pt,a4paper]{article}
\usepackage{epsf,epsfig,amsfonts,amsgen,amsmath,amstext,amsbsy,amsopn,amsthm,lineno}
\usepackage[dvips]{color}
\usepackage{graphicx}

\setlength{\textwidth}{150mm} \setlength{\oddsidemargin}{7mm}
\setlength{\evensidemargin}{7mm} \setlength{\topmargin}{-5mm}
\setlength{\textheight}{245mm} \topmargin -18mm

\newtheorem{theorem}{Theorem}

\theoremstyle{definition}

\newtheorem{claim}{Claim}

\newtheorem{conjecture}{Conjecture}

\baselineskip 15pt

\begin{document}

\title
{\bf {\Large  Subdivisions of vertex-disjoint cycles in bipartite
graphs }\thanks{Supported by NSFC (No. 11001214).}}
\date{}
\author{Shengning Qiao\thanks{Corresponding author. E-mail address: snqiao@xidian.edu.cn (S. Qiao).}  \\[2mm]
\small School of Mathematics and Statistics,\\
\small Xidian University,\\
\small  Xi'an 710071, Shaanxi, China \\ Bing
Chen \\[2mm]
\small Department of Applied Mathematics, School of Science,\\
\small Xi'an University of Technology,\\
\small  Xi'an 710048, Shaanxi, China \\} \maketitle

\begin{abstract}
Let $n\geq 6,k\geq 0$ be two integers. Let $H$ be a graph of order
$n$ with $k$ components, each of which is an even cycle of length at
least $6$ and $G$ be a  bipartite graph with bipartition $(X,Y)$
such that $|X|=|Y|\geq n/2$. In this paper, we show that if the
minimum degree of $G$ is at least $n/2-k+1$, then $G$ contains a
subdivision of $H$. This generalized an older result of Wang.
\medskip

\noindent {\bf Keywords:} subdivision; vertex-disjoint cycles;
bipartite graph
\smallskip

\end{abstract}

\section{Introduction}

We use Bondy and Murty \cite{Bondy} for terminology and notation not
defined here and consider finite simple graphs only.

Let $G$ be a graph. A set of subgraphs of $G$ is said to be {\it
vertex-disjoint} if no two of them have a common vertex in $G$. Wang
\cite{Wang10} considered the minimum degree condition for existence
of vertex-disjoint large cycles in a bipartite graph, as follows.

\begin{theorem}[Wang \cite{Wang10}]
Let $G$ be a bipartite graph with bipartition $(X,Y)$ such that
$|X|=|Y|\geq sk$, where  $s\geq 3$ and  $k\geq 1$ are two integers.
If the minimum degree of $G$ is at least $(s-1)k+1$, then $G$
contains $k$ vertex-disjoint cycles of length at least $2s$.
\end{theorem}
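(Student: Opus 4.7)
The plan is to prove Wang's theorem by induction on $k$. For the base case $k = 1$, the assumptions reduce to $|X| = |Y| \geq s$ and $\delta(G) \geq s$; here I would invoke the classical bipartite long-cycle lemma, which follows by taking a longest path $P = v_0 v_1 \cdots v_\ell$ in $G$, observing that both endpoints must have all of their neighbors on $P$, and analyzing the interleaving of $N(v_0)$ and $N(v_\ell)$ along $P$ to extract a cycle of length at least $2s$.

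For the inductive step, fix $k \geq 2$ and assume the result for $k-1$. Since $|X| = |Y| \geq sk \geq s(k-1)$ and $\delta(G) \geq (s-1)k + 1 \geq (s-1)(k-1) + 1$, the inductive hypothesis already supplies $k-1$ vertex-disjoint cycles of length $\geq 2s$. Arguing by contradiction, I would suppose $G$ admits no family of $k$ such cycles and choose a family $\mathcal{F} = \{C_1, \ldots, C_t\}$ of vertex-disjoint cycles of length $\geq 2s$ that is lexicographically extremal: (i) $t$ is maximum (so $t \leq k-1$); (ii) subject to (i), $\sum_i |V(C_i)|$ is minimum; (iii) subject to (i) and (ii), the number of $G$-edges between $V(\mathcal{F})$ and $R := V(G) \setminus V(\mathcal{F})$ is minimum. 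Because cycles in a bipartite graph balance the two sides, $|X \cap R| = |Y \cap R| \geq s$. Every $v \in R$ satisfies $\deg_G(v) \geq (s-1)k + 1$, and a direct pigeonhole count using $t \leq k-1$ shows that $v$ must have at least $s$ neighbors concentrated on a single cycle $C_i$.

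The argument then bifurcates. If $G[R]$ itself contains a cycle of length $\geq 2s$, then $\mathcal{F}$ can be enlarged, contradicting (i). Otherwise I would carry out an exchange on $C_i$: using two of $v$'s neighbors on $C_i$ with suitable indices, replace an arc of $C_i$ by $v$ together with an alternating bipartite segment drawn from $R$, obtaining either a modified cycle strictly shorter than $C_i$ (violating (ii)), or a modification that releases vertices of $C_i$ into the residue so that the enlarged residue contains the desired $k$-th long cycle (violating (i)), or a modification that strictly decreases the cross-edge count (violating (iii)). The principal obstacle will be the bipartite parity constraint in this exchange step: every cycle must have even length, so absorbing a vertex of $R$ into $C_i$ forces the simultaneous expulsion of a vertex on the matching side, and one must certify that the outcome is still a bipartite cycle of length $\geq 2s$. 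The combinatorial crux is locating a \emph{second} cooperating vertex $u \in R$ whose neighbors on $C_i$ complement those of $v$; this is precisely where the full strength of both $|X| = |Y| \geq sk$ and $\delta(G) \geq (s-1)k + 1$ is used simultaneously, and the required case analysis is noticeably more delicate than in the analogous Corr\'adi--Hajnal style proofs for general graphs.
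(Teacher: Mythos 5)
First, a point of reference: the paper does not prove this statement at all. It is Theorem~1, quoted from Wang's paper \cite{Wang10} as background motivation; the only proof in this paper is of Theorem~3. So there is no in-paper argument to compare yours against, and your proposal has to stand on its own. As a standalone sketch it has the right general shape (extremal family of disjoint long cycles, degree counting into the residue, exchange arguments), but several load-bearing steps are either wrong as stated or simply not carried out.

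Concretely: (a) Your ``direct pigeonhole'' claim that every $v\in R$ has at least $s$ neighbours concentrated on a single $C_i$ does not follow from $\deg(v)\geq (s-1)k+1$ and $t\leq k-1$. Since $(s-1)k+1=(s-1)(k-1)+s$, if $v$ has at most $s-1$ neighbours on each cycle you may only conclude $d(v,R)\geq s$, and those $s$ neighbours can all lie in $R$. To rule this out you must first bound $d(v,R)$, which is typically only possible for the endpoints of a longest path in $G[R]$ (using that $G[R]$ has no cycle of length $\geq 2s$), not for an arbitrary $v\in R$. (b) The inequality $|X\cap R|=|Y\cap R|\geq s$ is asserted but not justified: the balance of the bipartition gives equality of the two sides of $R$, but the lower bound $s$ requires knowing that the extremal cycles occupy at most $2s(k-1)$ vertices in total, i.e.\ that each $C_i$ has length exactly $2s$. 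That itself needs an argument (shortening a long cycle through a residue vertex with many neighbours on it), which you have not supplied. (c) The entire exchange step --- absorbing $v$ into $C_i$ while expelling a vertex of the opposite colour, and finding the ``second cooperating vertex'' $u\in R$ --- is described as the ``principal obstacle'' and ``combinatorial crux'' but is never actually executed; you acknowledge that this is where the hypotheses must be used in full strength, which is precisely an admission that the core of the proof is missing. As it stands the proposal is a plausible plan, not a proof.
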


A {\em subdivision } of a graph $H$ is a graph that can be obtained
from $H$ by a sequence of edge subdivisions. Let $H$ be a graph on
$n$ vertices and $k$ non-trivial components such that every
component contains at most one cycle. A {\em cyclic subdivision } of
$H$ is one such that only cyclic edges of $H$ are subdivided.

Recently, Babu and Diwan \cite{Babu} gave the following result.

\begin{theorem}[Babu and Diwan \cite{Babu}]
Let $H$ be a graph with $n$ vertices and $k$ non-trivial components
such that every component contains at most one cycle. Let $G$ be a
graph with at least $n$ vertices. If the minimum degree of $G$ is at
least $n-k$, then $G$ contains a cyclic subdivision of $H$.
\end{theorem}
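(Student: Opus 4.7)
The plan is to construct a cyclic subdivision of $H$ in $G$ by processing the $k$ non-trivial components $H_1,\dots,H_k$ one at a time. Each $H_i$ is either a tree or a unicyclic graph, so $H_i$ decomposes as a spanning tree $T_i$ together with possibly one additional edge $e_i=u_iv_i$ completing its unique cycle. Because a \emph{cyclic} subdivision only subdivides edges lying on a cycle of $H$, the trees $T_i$ must be embedded with no subdivisions at all, while the extra edges $e_i$ may be replaced by internally-disjoint paths in $G$.

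The first step is to embed the trees $T_i$ greedily. Fix a root in each $T_i$ and order its vertices by BFS; at each extension step the current image vertex in $G$ picks an unused neighbor to host the next tree-vertex. Since $|V(H)|=n$ and $\delta(G)\ge n-k$, a careful count should show that throughout this phase the number of already-used vertices in $G$ never exceeds $n-k$ (the slack coming from the $k$ cycle-closing edges not yet routed), so an unused neighbor is always available. This handles every non-cyclic edge of $H$.

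The second step is to close the cycles. For each unicyclic $H_i$, once $T_i$ has been embedded we need an internally disjoint path in $G$ from the image of $u_i$ to that of $v_i$ avoiding all other used vertices. The $-k$ in the hypothesis $\delta(G)\ge n-k$ is precisely the slack that lets us reserve a set $R\subseteq V(G)$ of free vertices with enough internal degree that these cycle-closing paths can be built one at a time, for instance by a greedy path-extension argument exploiting the fact that $G[R]$ inherits a high minimum degree from $G$.

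The main obstacle, as I see it, is the interaction between the tree embedding and the cycle-closing paths: if the trees are embedded naively the endpoints $u_i,v_i$ can end up surrounded by used vertices, leaving no neighbor available to begin or end the closing path. Overcoming this requires ordering the tree-embeddings so that the images of the cycle-edge endpoints retain neighbors in $R$; equivalently, one can build each cycle-closing path in parallel with its tree rather than sequentially after it. A clean inductive formulation on $k$, peeling off one component at a time but strengthening the inductive hypothesis beyond a bare minimum-degree bound (for example, tracking degrees into the reserve $R$), seems the most promising route to make this rigorous.
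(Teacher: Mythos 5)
This statement is not proved in the paper at all: it is Theorem~2, quoted from Babu and Diwan \cite{Babu}, so there is no in-paper proof to compare against; the actual proof in the cited reference is a substantial induction, and the degree bound $n-k$ is tight, which already signals that a greedy scheme needs to use the subdivision freedom in an essential way.

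Your proposal has a genuine gap, and in fact the one step you assert confidently is where it breaks. In the tree-embedding phase, the spanning forest $\bigcup_i T_i$ contains all $n$ vertices of $H$, and none of its edges may be subdivided, so by the end of that phase you have used up to $n-1$ vertices of $G$ before placing the last one. Hence the claim that ``the number of already-used vertices never exceeds $n-k$'' is false for $k\geq 2$: a host vertex of degree exactly $n-k$ can have all of its neighbours already used well before the forest is complete (try $H$ equal to $k$ disjoint triangles, so $n=3k$ and $\delta(G)\geq 2k$, while the forest alone needs $3k$ vertices). The ``slack coming from the $k$ cycle-closing edges'' does not exist in this phase, because omitting those edges does not reduce the number of vertices the forest occupies; the real slack in the hypothesis $\delta(G)\geq n-k$ comes from the permission to subdivide the cyclic edges, and your plan never brings that permission to bear on the phase where you need it. The second phase is likewise only a hope: the reserve set $R$ may be empty (the theorem allows $|V(G)|=n$), the bound on degrees into $R$ is not worked out, and you yourself flag the interaction between the two phases as unresolved (``seems the most promising route''). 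So what you have is a plausible-sounding outline whose quantitative core is incorrect and whose hard step --- coordinating the unsubdividable forest with the subdividable cycle edges under a degree bound that is $k-1$ short of what naive greed requires --- is exactly the content of Babu and Diwan's theorem, not something the outline supplies.
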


Motivated by Theorem 2, this paper focuses on the analogous problem
for existence of subdivisions of vertex-disjoint cycles in bipartite
graphs. Our result is as follows.

\begin{theorem}
Let $H$ be a graph of order $n$ with $k$ components, each of which
is an even cycle of length at least $6$. Suppose that $G$ is a
bipartite graph with bipartition $(X,Y)$ such that $|X|=|Y|\geq
n/2$. If the minimum degree of $G$ is at least $n/2-k+1$, then $G$
contains a subdivision of $H$.
\end{theorem}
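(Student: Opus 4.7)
The plan is to prove the theorem by induction on $k$. First I observe that since $G$ is bipartite, every cycle of $G$ has even length, and any even cycle of length at least $2s$ is a subdivision of $C_{2s}$. Writing the components of $H$ as $C_{2s_1},\dots,C_{2s_k}$ with $s_1\le\cdots\le s_k$ and $\sum 2s_i=n$, the theorem is equivalent to finding $k$ pairwise vertex-disjoint cycles $F_1,\dots,F_k$ in $G$ with $|F_i|\ge 2s_i$ for each $i$.

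For the base case $k=1$ the goal is a cycle of length at least $n$ in a balanced bipartite graph with $|X|=|Y|\ge n/2$ and $\delta(G)\ge n/2$; this is immediate from the standard bipartite circumference bound $c(G)\ge 2\min(\delta(G),|X|)\ge n$.

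For the inductive step, the naive attempt---extract a cycle $C$ of length exactly $2s_1$ and apply induction to $G-V(C)$ and $H-C_{2s_1}$---almost works: the size condition $|X|-s_1\ge (n-2s_1)/2$ transfers, but $\delta(G-V(C))\ge n/2-k+1-s_1$ leaves a deficit of exactly one against the required $n/2-s_1-k+2$. To close this gap I would first invoke Theorem~1 with $s=3$ (whose hypotheses hold because $n\ge 6k$ forces $|X|\ge 3k$, and $\delta(G)\ge n/2-k+1\ge 2k+1$) to obtain $k$ vertex-disjoint cycles $D_1,\dots,D_k$ of length $\ge 6$ in $G$. If their lengths, sorted appropriately, already dominate $(2s_1,\dots,2s_k)$, we are done. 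Otherwise I would iteratively enlarge any short $D_i$ by an \emph{ear-insertion}: replace an edge $uv\in D_i$ by a $u$-$v$ path of odd length at least $3$ whose internal vertices lie in $V(G)\setminus\bigcup_j V(D_j)$, which increases $|D_i|$ by a positive even amount while leaving the other cycles untouched. The minimum-degree hypothesis on $G$ should supply such paths whenever enough unused vertices remain.

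The main obstacle I foresee is the extremal regime, where the cycles $D_1,\dots,D_k$ together nearly cover $V(G)$ and no ear-path of the required length is available. In this case my plan is to execute a swap instead: take a too-long cycle $D_j$ and a too-short cycle $D_i$, find a short path in $G$ connecting them (forced by the density of $G$), and transfer a segment of $D_j$ into $D_i$ while preserving vertex-disjointness and the correct length parities. Formulating the swap so that each application strictly decreases the total shortfall, and verifying that the degree hypothesis $\delta(G)\ge n/2-k+1$ always provides the paths the swap needs, will be the bulk of the technical work.
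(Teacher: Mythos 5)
Your preliminary reductions are sound: a subdivision of $H$ is exactly a family of $k$ vertex-disjoint cycles whose lengths dominate $(2s_1,\dots,2s_k)$, the base case $k=1$ does follow from the longest-path argument (or from Theorem 1 with $k=1$), the deficit-of-one computation showing that naive induction fails is correct, and Theorem 1 with $s=3$ applies and yields $k$ disjoint cycles of length at least $6$. But everything after that is a plan rather than a proof, and the plan defers precisely the part where the theorem is hard. You assert that the minimum-degree hypothesis \emph{should supply} ear-paths whenever enough unused vertices remain, and that in the extremal regime a \emph{swap} can be formulated which strictly decreases the total shortfall; neither claim is substantiated, and neither is routine. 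Note that when $2|X|=n$ the required subdivision is spanning, so there may be no unused vertices at all once the initial cycles are fixed; and even when unused vertices exist, they may send all their edges into cycles other than the one you need to enlarge, so ear-insertion stalls. The bound $\delta(G)\ge n/2-k+1$ is weak in this respect (a vertex can have up to $|C_i|/2$ neighbours on each cycle), so breaking such stalls requires a global degree-sum argument over several carefully chosen vertices, not a local path-finding step, and your proposal gives no such argument.

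That missing content is exactly what the paper's proof consists of, by a quite different mechanism: it takes an edge-maximal counterexample, adds a missing edge $ab$ to create a subdivision $H^*$, keeps the $k-1$ subdivided cycles $C_1^*,\dots,C_{k-1}^*$ chosen first to minimize their total order, then to maximize a longest path $P$ in the remainder $G_2$, then to maximize the number of induced edges; it then proves the counting claims (Claims 1--5: e.g.\ $d(u',C_i^*)+d(u'',C_i^*)\ge |C_i|-1$ for some $i$, that $P$ is a Hamilton path of $G_2$, and the existence of two special cycles $C_p^*$ and $C_q^*$ with large degree sums from six distinguished vertices), and finishes with an extensive case analysis performing explicit vertex exchanges among $G_2$, $C_p^*$ and $C_q^*$. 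Your proposed swap is morally this exchange step, but without analogues of those counting claims there is no argument that a suitable swap exists or that your shortfall potential decreases, so the proposal as it stands does not constitute a proof.
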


It is easy to see that Theorem 3 generalized Theorem 1. In fact, we
conjecture the following.

\begin{conjecture}
Let $H$ be a graph of order $n$ with $k$ components, each of which
is an even cycle. Suppose that $G$ is a bipartite graph with
bipartition $(X,Y)$ such that $|X|=|Y|\geq n/2$. If the minimum
degree of $G$ is at least $n/2-k+1$, then $G$ contains a subdivision
of $H$.
\end{conjecture}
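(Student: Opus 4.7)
The plan is to prove Theorem~3 by induction on the number $k$ of components of $H$. The base case $k=1$ is immediate from Theorem~1: when $H$ is a single even cycle of length $n\ge 6$, the hypotheses become $|X|=|Y|\ge n/2$ and $\delta(G)\ge n/2$, and Theorem~1 with $s=n/2\ge 3$ and one cycle produces a cycle in $G$ of length at least $n$, which is a subdivision of $H$.

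For the inductive step, label the components $C_1,\dots,C_k$ of $H$ with $|C_i|=2m_i$ and $m_1\le m_2\le\cdots\le m_k$. The naive attempt --- find a subdivision $D$ of $C_1$ in $G$, delete $V(D)$, and recurse on $G-V(D)$ with $H-C_1$ --- is the main obstacle: although the balanced bipartition is preserved (since $D$ is even, $|X'|=|Y'|\ge(n-2m_1)/2$), deleting $V(D)$ can cost up to $2m_1$ in minimum degree, whereas the induction hypothesis permits a loss of only $m_1-1$. The deficit of roughly $m_1+1$ cannot be absorbed by a single-cycle deletion.

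To bypass this, the strategy is to produce all $k$ cycles simultaneously by an extremal argument. First apply Theorem~1 with $s=m_1$, which is feasible since $|X|=|Y|\ge n/2\ge km_1$ and $\delta(G)\ge n/2-k+1\ge(m_1-1)k+1$, to obtain an initial family of $k$ vertex-disjoint cycles $D_1,\dots,D_k$ in $G$ with $|D_i|\ge 2m_1$. Among all such families, select one whose sorted length sequence $|D_1|\ge|D_2|\ge\cdots\ge|D_k|$ is lexicographically maximum; then reindex the targets in decreasing order $m_1\ge m_2\ge\cdots\ge m_k$ as well. The key claim to establish is that then $|D_i|\ge 2m_i$ for every $i$, in which case $D_1\cup\cdots\cup D_k$ is the desired subdivision of $H$.

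The hard part is verifying this claim: if $j$ is the least index with $|D_j|<2m_j$, one must exhibit a length-$2$ extension of some $D_\ell$ with $\ell\ge j$ that strictly improves the sorted length sequence, contradicting lex-maximality. The reservoir $R=V(G)\setminus\bigcup_i V(D_i)$, combined with the hypothesis $\delta(G)\ge n/2-k+1$, should force, for some edge $uv$ of $D_\ell$, a pair of $R$-vertices $x\in N(u)$ and $y\in N(v)$ with $xy\in E(G)$, yielding a detour $u$--$x$--$y$--$v$ that lengthens $D_\ell$ by two while automatically respecting the bipartition. Executing this step cleanly --- especially when $|R|$ is small and the detour must be rerouted through another $D_i$ while preserving vertex-disjointness --- is the principal technical difficulty, and is likely where the length-at-least-$6$ restriction (which excludes $C_4$) enters, consistent with the general even-cycle conjecture remaining open.
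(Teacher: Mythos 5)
There is a genuine gap --- in fact two. First, the statement you were asked to prove is the paper's \emph{Conjecture}, in which the components of $H$ may be arbitrary even cycles (including $C_4$) and which the paper explicitly leaves open; the paper proves only Theorem~3, where every component has length at least $6$. Your argument is aimed at Theorem~3 and cannot even start for the Conjecture: the very first step invokes Theorem~1 with $s=m_1$, which requires $s\ge 3$, so a $C_4$ component ($m_1=2$) is not covered, and you concede as much in your closing sentence. So, relative to the stated result, the proposal is not a proof at all.

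Second, even as an argument for Theorem~3 the central claim is asserted rather than proved. You choose a family $D_1,\dots,D_k$ of disjoint cycles of length at least $2m_1$ with lexicographically maximal sorted length sequence and claim $|D_i|\ge 2m_i$ for all $i$; this claim \emph{is} the theorem, and the mechanism you offer for improving a deficient family --- an edge $uv$ of some $D_\ell$ together with reservoir vertices $x\in N(u)$, $y\in N(v)$, $xy\in E(G)$ --- is not forced by $\delta(G)\ge n/2-k+1$. The reservoir $R$ may be empty (e.g.\ $|X|=|Y|=n/2$ with the family spanning $G$) or an independent set whose neighbours sit on the ``wrong'' cycles, and the degree of a cycle vertex can be concentrated entirely on the other $D_i$'s, so no length-$2$ detour through $R$ exists and one is forced to exchange vertices between cycles and the leftover graph. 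That exchange analysis is precisely where the paper's proof of Theorem~3 does all its work: it takes an edge-maximal counterexample, adds a missing edge $ab$ to obtain subdivisions $C_1^*,\dots,C_{k-1}^*$ and a remainder $G_2$ with a Hamilton path, and then needs Claims~1--5 plus a long case analysis (moving single vertices among $G_2$, $C_p^*$ and $C_q^*$, with secondary extremal criteria on $|G_1|$, the path length, and the edge count $\beta(G_1)$) to reach a contradiction. Your sketch contains no substitute for that analysis, and there is also no a priori reason that sorted-lex-maximality, which pushes length into the longest cycles, makes the $j$-th longest cycle reach its particular target $2m_j$. As it stands the proposal is a plan whose key step is exactly the open difficulty.
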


If the Conjecture is true, then the minimum degree condition is
sharp. Let $k$ be an even. Let $H$ be an union of $k$ cycles such
that $k-1$ cycles of length 4 and a cycle of length 6. Let $G$ be a
bipartite graph with bipartition $(X_1\cup X_2\cup \{u\},Y_1\cup
Y_2\cup\{v\})$ such that the two induced subgraphs by $X_1\cup Y_1$
and $X_2\cup Y_2$ are isomorphic to $K_{k,k}$. Further, $u$ is
adjacent to every vertex in $Y_1\cup \{v\}$ and $v$ is adjacent to
every vertex in $X_2\cup \{u\}$. The vertices of $X_1$ are matched
with vertices of $Y_2$ by $k$ independent edges of $G$. Clearly, $G$
contains exactly $4k+2$ vertices and the minimum degree of $G$ is
$k+1<k+2$. It is not difficult to see that if $G$ contains $H$ then
the edge $uv$ will be contained by the cycle of length exactly 6.
Let $C$ be the cycle of length exactly 6 with contained the edge
$uv$. But, $G-V(C)$ does not contain a spanning subgraph with a
union of $k-1$ cycles of length 4.

We postpone the proof of Theorem 3 to the next section.

\section{Proof of Theorem 3}

We first introduce some further terminology and notations that will
be used later.

Let $G$ be a graph. The order of $G$ is denoted by $|G|$. For a
vertex $v$ and a subgraph $H$ of $G$, we use $N(v,H)$ and $d(v,H)$
to denote the set of vertices and the number of vertices in $H$
which are adjacent to $v$, respectively. Thus $d(v,G)$ is the degree
of $v$ in $G$. By $\delta(G)$ we denote the minimum degree of $G$.
For a subset $S$ of the vertices or the edges in $G$, we use $G[S]$
to denote the subgraph of $G$ induced by $S$. If $v\not\in V(H)$,
then we denote $G[V(H)\cup \{v\}]$ by $H+v$. If $v\in V(H)$, then we
denote $G[V(H)\setminus \{v\}]$ by $H-v$. By $G-H$ we denote
$G[V(G)\setminus V(H)]$. Let $x,y$ be two nonadjacent vertices in
$G$. We use $G+xy$ to denote the graph by adding the edge $xy$ in
$G$. Let $C$ be a cycle with a given orientation. We use $v^+$
(resp. $v^-$) to denote the successor (resp. the predecessor) of $v$
along $C$ according to this orientation.

{\bf Proof of Theorem 3.} By contradiction. Suppose that $G$ is a
graph satisfying the conditions of the theorem but containing no
subdivisions of $H$ such that the number of edges of $G$ is as large
as possible. This implies that that $G$ is not a complete bipartite
graph. Let $a\in X$ and $b\in Y$ be two nonadjacent vertices of $G$.
Denote the components of $H$ by $C_1,C_2,\ldots, C_k$. It follows
from the choice of $G$ that $G+ab$ contains a subdivision $H^*$ of
$H$. Let $C^*_i$ be the subdivision of $C_i$ in $H^*$ for $1\leq
i\leq k$. Without loss of generality, we assume that $ab\in
E(C^*_k)$. Then $G$ contains a subdivision of $H-C_k$ with
$$
\sum_{i=1}^{k-1}|C^*_i|\leq |G|-|C_k|.
$$
Let $G_1$ be the subgraph of $G$ induced by
$\bigcup_{i=1}^{k-1}V(C^*_i)$ and
$$
\beta(G_1)=\sum_{i=1}^{k-1}|E(G[V(C^*_i)])|.
$$
Set $G_2=G-G_1$ and let $P$ be a longest path in $G_2$. We give an
orientation to each $C^*_i$ for $1\leq i\leq k-1$.

We assume that
$C^*_1, C^*_2, \ldots, C^*_{k-1}$ are chosen in $G$ such that\\
(i) $|G_1|$ is as small as possible;\\
(ii) $P$ is as long as possible, subject to (i);\\
(iii) $\beta(G_1)$ is as large as possible, subject to (i) and (ii).

First, we prepare some claims.

\begin{claim}
Let $u$ be a vertex of $G_2$ and $v$ be a vertex of $C^*_i$ with
$1\leq i\leq k-1$. If $|C^*_i|>|C_i|$, then $d(u,C^*_i)<|C_i|/2$ and
$d(v,C^*_i)< |C_i|/2$. If $d(u,C^*_i)=|C_i|/2$ or
$d(v,C^*_i)=|C_i|/2$, then $|C^*_i|=|C_i|$.
\end{claim}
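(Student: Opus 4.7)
The plan is to prove the claim by contradiction; the two assertions are equivalent (the degree-$|C_i|/2$ clause is just the extremal case of the strict inequality), so it suffices to show that the degree hypothesis $d(w,C^*_i)\ge|C_i|/2$ (with $w=u\in V(G_2)$ or $w=v\in V(C^*_i)$) forces $|C^*_i|=|C_i|$. Assume for a contradiction that $|C^*_i|>|C_i|$ together with such a $w$. Since $G$ is bipartite and $|C_i|$ is even, $|C^*_i|\ge|C_i|+2$. The goal is to construct an even cycle $C'$ with $|C_i|\le|C'|\le|C^*_i|-2$ lying inside $G[V(C^*_i)\cup\{u\}]$ (or inside $G[V(C^*_i)]$, through $v$, in the internal case); such a $C'$ is automatically a subdivision of $C_i$ in the bipartite setting, is vertex-disjoint from the remaining $C^*_j$ for $j\ne i$, and replacing $C^*_i$ by $C'$ strictly decreases $|G_1|$, contradicting~(i).

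For the external case $u\in V(G_2)$, assume $u\in Y$ and list $N(u,C^*_i)$ cyclically along $C^*_i$ as $x_1,\dots,x_d$ with arc gaps $g_1,\dots,g_d$ summing to $|C^*_i|$. The hypothesis yields $|C^*_i|/2-d\le(|C^*_i|-|C_i|)/2$, so there are few non-neighbor $X$-vertices on $C^*_i$. If every $X$-vertex of $C^*_i$ is a neighbor of $u$ (all $g_j=2$), pick three consecutive $x_j,x_{j+1},x_{j+2}$ and take $C'$ to be the arc from $x_{j+2}$ back to $x_j$ (of length $|C^*_i|-4$) together with the edges $x_{j+2}u$ and $ux_j$; then $|C'|=|C^*_i|-2$. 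Otherwise some $g_j$ skips at least one non-neighbor $X$-vertex, so $g_j\in[4,|C^*_i|-|C_i|+2]$, and replacing this short arc by the path $x_j-u-x_{j+1}$ gives $|C'|=|C^*_i|-g_j+2\in[|C_i|,|C^*_i|-2]$.

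For the internal case $v\in V(C^*_i)$, assume $v\in Y$ and write $C^*_i-v$ as the path $p_1=x^+,p_2,\dots,p_{|C^*_i|-1}=x^-$ where $x^\pm$ are the two $C^*_i$-neighbors of $v$; the chord neighbors of $v$ sit at odd indices $j\in\{3,5,\dots,|C^*_i|-3\}$. For each chord $vp_j$, the cycles $v-p_1-p_2-\cdots-p_j-v$ and $v-p_{|C^*_i|-1}-p_{|C^*_i|-2}-\cdots-p_j-v$ have lengths $j+1$ and $|C^*_i|-j+1$, respectively, and at least one lies in $[|C_i|,|C^*_i|-2]$ exactly when $j\in[3,|C^*_i|-|C_i|+1]\cup[|C_i|-1,|C^*_i|-3]$, which I call the \emph{good} positions. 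A direct count shows that the remaining odd positions in $[3,|C^*_i|-3]$ number at most $|C_i|-|C^*_i|/2-2$. If every chord of $v$ sat at a bad position, then $d-2\le|C_i|-|C^*_i|/2-2$, which together with $d\ge|C_i|/2$ forces $|C^*_i|\le|C_i|$, contradicting $|C^*_i|>|C_i|$. Hence some chord is good and yields the desired $C'$.

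The main obstacle is the internal case. Because $v$ already lies on $C^*_i$, no simple external shortcut is available, and I must route the shorter cycle through a chord of $v$. The target length window $[|C_i|,|C^*_i|-2]$ is narrow, and the chord-counting that excludes ``all chords bad'' is tight in the sense that $d\ge|C_i|/2$ is precisely the hypothesis needed to force some chord to be good, with no slack. Care is therefore needed both in verifying the parities of the constructed cycles and in bounding the bad-position count correctly, especially in the boundary regime $|C^*_i|\approx 2|C_i|$ where the bad interval degenerates.
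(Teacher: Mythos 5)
Your proposal is correct and takes essentially the same route as the paper: the paper's proof of this claim just asserts that $d(u,C^*_i)\ge |C_i|/2$ or $d(v,C^*_i)\ge |C_i|/2$ would allow $C^*_i$ (resp.\ $C^*_i+u$) to be replaced by a cycle of length at least $|C_i|$ but less than $|C^*_i|$, contradicting the minimality of $|G_1|$ in (i), which is exactly your shortcut/chord construction spelled out in detail. The only blemish is that your bad-position count $|C_i|-|C^*_i|/2-2$ can be negative when $|C^*_i|\ge 2|C_i|-2$ (the true count is then $0$), but since $d\ge |C_i|/2\ge 3$ guarantees a chord, the contradiction persists and the argument stands.
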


\begin{proof}
If $d(u,C^*_i)\geq |C_i|/2$, then it is not difficult to see that
$C^*_i+u$ contains a cycle of length less than $|C^*_i|$ but at
least $|C_i|$, which can be seen as a subdivision of $C_i$. This
contradicts the choice of $C^*_1, C^*_2, \ldots, C^*_{k-1}$ in (i).
If $d(v,C^*_i)\geq |C_i|/2$, then we can get a similar
contradiction. So we have $d(u,C^*_i)<|C_i|/2$ and $d(v,C^*_i)<
|C_i|/2$. The result $|C^*_i|=|C_i|$ if $d(u,C^*_i)=|C_i|/2$ or
$d(v,C^*_i)=|C_i|/2$ follows immediately.
\end{proof}

By Claim 1, we can immediately obtain that for any vertices $u\in
V(G_2),v\in V(C^*_i)$, we have $d(u,C^*_i)\leq |C_i|/2$ and
$d(v,C^*_i)\leq |C_i|/2$ for each $i$ with $1\leq i\leq k-1$.

\begin{claim}
Let $u'$ be a vertex in $V(G_2)\cap X$ and $u''$ be a vertex in
$V(G_2)\cap Y$. If $d(u',G_2)+d(u'',G_2)<|C_k|$, then there exists
an $C^*_i$ with $1\leq i\leq k-1$ such that
$d(u',C^*_i)+d(u'',C^*_i)\geq |C_i|-1$. Moreover, we have
$|C^*_i|=|C_i|$, and there exists a vertex $v\in N(u',C_i^*)$ such
that $C^*_i+u''-v$ contains $C_i$.
\end{claim}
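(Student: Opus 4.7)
The plan is to combine a pigeonhole degree-counting argument with Claim~1 and then exploit the bipartite structure of $C_i^*$ to build the desired cycle.

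First, I would translate the global degree hypothesis into degrees inside the $C_i^*$'s. Since $\delta(G)\geq n/2-k+1$ and, by hypothesis, $d(u',G_2)+d(u'',G_2)\leq |C_k|-1$, we obtain
$$d(u',G_1)+d(u'',G_1)\geq 2(n/2-k+1)-(|C_k|-1)=n-2k+3-|C_k|.$$
Using $n=\sum_{j=1}^{k}|C_j|$, this rewrites as $\sum_{i=1}^{k-1}[d(u',C_i^*)+d(u'',C_i^*)]\geq \sum_{i=1}^{k-1}(|C_i|-2)+1$, so by pigeonhole some $i\in\{1,\ldots,k-1\}$ satisfies $d(u',C_i^*)+d(u'',C_i^*)\geq |C_i|-1$. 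This proves the first part of the conclusion.

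Next, I would apply Claim~1 to this $i$. Since each of $d(u',C_i^*), d(u'',C_i^*)$ is at most $|C_i|/2$ while their sum is at least $|C_i|-1$, at least one of the two must equal $|C_i|/2$, and the second sentence of Claim~1 then forces $|C_i^*|=|C_i|$. Furthermore, each of the two degrees is at least $|C_i|/2-1$, and at most one of $u',u''$ can miss a neighbour on $C_i^*$.

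Finally, to produce $v$, set $\ell=|C_i|/2\geq 3$ and write $C_i^*=x_1y_1x_2y_2\cdots x_\ell y_\ell x_1$ with $x_j\in X$ and $y_j\in Y$ (indices mod $\ell$). Since $u'\in X$ and $u''\in Y$, we have $N(u',C_i^*)\subseteq\{y_1,\ldots,y_\ell\}$ and $N(u'',C_i^*)\subseteq\{x_1,\ldots,x_\ell\}$. I look for an index $v$ with $y_v\in N(u',C_i^*)$ and $x_v,x_{v+1}\in N(u'',C_i^*)$; for such $v$, the edges $u''x_v$ and $u''x_{v+1}$ close the path $C_i^*-y_v$ (from $x_v$ to $x_{v+1}$) into a cycle of length $2\ell=|C_i|$ inside $C_i^*+u''-y_v$. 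The indices forbidden by $u'$ form a set of size at most $1$ (a single $a$ with $y_a\notin N(u')$), while those forbidden by $u''$ form a set of size at most $2$ (a pair $\{b-1,b\}$ with $x_b\notin N(u'')$); since at most one of $u',u''$ has a missing neighbour, at most two of the $\ell$ indices are forbidden, so for $\ell\geq 3$ a valid $v$ exists.

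The main delicate point is the last step: the index-counting works precisely because at most one of $d(u',C_i^*), d(u'',C_i^*)$ is strictly less than $|C_i|/2$, and this combined with the hypothesis $|C_i|\geq 6$ (hence $\ell\geq 3$) just barely leaves at least one good index to choose.
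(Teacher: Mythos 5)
Your proposal is correct and follows essentially the same route as the paper: the same degree count (stated via pigeonhole rather than by contradiction) to locate $C^*_i$ with $d(u',C^*_i)+d(u'',C^*_i)\geq |C_i|-1$, Claim~1 to force $|C^*_i|=|C_i|$, and then the choice of $v$ using $|C_i|\geq 6$. Your explicit index-counting for the last step is in fact more detailed than the paper's brief "we can also find a vertex $v$", but it is the same underlying argument.
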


\begin{proof}
If $d(u',C^*_j)+d(u'',C^*_j)\leq |C_j|-2$ for all $j$ with $1\leq
j\leq k-1$, then
\begin{eqnarray}
\nonumber & &d(u',G)+d(u'',G)\\
\nonumber &=& d(u',G_1)+d(u'',G_1)+d(u',G_2)+d(u'',G_2) \\
\nonumber &=& \sum\limits_{j=1}^{k-1}(d(u',C^*_j)+d(u'',C^*_j))+d(u',G_2)+d(u'',G_2) \\
\nonumber &<&\sum\limits_{j=1}^{k-1} (|C_j|-2)+|C_k| \\
\nonumber &=& n-2k+2,
\end{eqnarray}
contradicting the condition $\delta(G)\geq n/2-k+1$. Thus, there
exists an $C^*_i$ with $1\leq i\leq k-1$ such that
$d(u',C^*_i)+d(u'',C^*_i)\geq |C_i|-1$. This implies that
$d(u',C^*_i)=|C_i|/2$ or $d(u'',C^*_i)=|C_i|/2$. By Claim 1 we have
$|C^*_i|=|C_i|$. If $d(u'',C^*_i)=|C_i|/2$, then any $v\in
N(u',C^*_i)$ can be chosen as the required vertex. If
$d(u'',C^*_i)=|C_i|/2-1$, then $d(u',C^*_i)=|C_i|/2$. Since
$|C_i|\geq 6$, we can also find a vertex $v\in N(u',C^*_i)$ such
that $C^*_i+u''-v$ contains $C_i$.
\end{proof}

Now let $P=u_1u_2\cdots u_s$ and assume that $u_1\in X$.

\begin{claim}
$P$ is a Hamilton path of $G_2$.
\end{claim}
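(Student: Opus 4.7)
I plan to prove Claim 3 by contradiction. Assume $P = u_1 u_2 \cdots u_s$ is not a Hamilton path of $G_2$, and pick $u \in V(G_2) \setminus V(P)$. A bipartite count using $|V(G_1) \cap X| = |V(G_1) \cap Y|$ (every subdivided even cycle meets $X$ and $Y$ in equal numbers) together with $|X|=|Y|$ forces $|V(G_2) \cap X| = |V(G_2) \cap Y|$, so by passing to the opposite side if necessary I may assume $u \in Y$. Since $P$ is a longest path in $G_2$, every $G_2$-neighbor of $u_1$ lies on $V(P)$, giving the bipartite bound $d(u_1, G_2) \le |V(P) \cap Y|$.

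The central step is to apply Claim 2 to the pair $(u_1, u) \in X \times Y$. Once the hypothesis $d(u_1, G_2) + d(u, G_2) < |C_k|$ is secured, Claim 2 supplies an index $i \in \{1, \ldots, k-1\}$ together with a vertex $v \in N(u_1, C^*_i)$ such that $C^*_i + u - v$ still contains $C_i$ as a subdivision. The corresponding swap---remove $v$ from $C^*_i$ and install $u$ in its place---leaves $|G_1|$ unchanged and yields a new valid subdivision of $H - C_k$. In the new $G_2$, the path $v\, u_1\, u_2\, \cdots\, u_s$ is present: the edge $vu_1$ is supplied by $v \in N(u_1, C^*_i)$, and every $u_j$ remains in the new $G_2$ because only $u$ was moved into $G_1$. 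This path has $s+1$ vertices, contradicting the extremal choice of $P$ in (ii).

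The technical core is to verify the degree-sum bound $d(u_1, G_2) + d(u, G_2) < |C_k|$. I plan to handle this by a P\'osa-style rotation on $P$: if $u$ has a $G_2$-neighbor outside $V(P)$, combining this edge with the longest-path property produces a rotation of $P$ whose new endpoint lies in $V(G_2) \setminus V(P)$, and iterating either yields a strictly longer path (contradicting (ii) outright) or arrives at a configuration where $N(u, G_2) \subseteq V(P)$. Then $d(u_1, G_2) \le |V(P) \cap Y|$ and $d(u, G_2) \le |V(P) \cap X|$ sum to at most $s$, so it remains to show $s < |C_k|$. This follows from the inequality $\sum_{i=1}^{k-1}|C^*_i| \le |G| - |C_k|$, which gives $|V(G_2)| \ge |C_k|$: if already $s \ge |C_k|$, the path $P$ would cover all of $V(G_2)$, contradicting the existence of $u$.

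The principal obstacle I anticipate is the rotation bookkeeping: I must carefully track bipartite parities so that the endpoint of the rotated path opposite to $u$ stays in $X$ (possibly by switching the roles of $u_1$ and $u_s$), and confirm that rotation never silently creates a longer path unless we are already finished. If the Claim 2 hypothesis stubbornly refuses to hold after all rotations, a secondary application of Claim 2 to both endpoints of $P$ (when they lie in opposite parts) should close the argument by producing either a strict decrease in $|G_1|$ (contradicting (i)) or a strict increase in $\beta(G_1)$ (contradicting (iii)).
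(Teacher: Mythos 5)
Your overall swap mechanism is fine and matches the paper's: once Claim~2 applies to a pair consisting of $u_1$ and a vertex $u$ off $P$, exchanging $u$ for $v\in N(u_1,C^*_i)$ keeps $|G_1|$ fixed and creates the longer path $vu_1u_2\cdots u_s$, contradicting (ii). The genuine gap is the verification of the hypothesis $d(u_1,G_2)+d(u,G_2)<|C_k|$, and your two devices for it do not work. First, P\'osa rotation is about endpoints of $P$ and their neighbours \emph{on} $P$; it says nothing about a vertex $u\notin V(P)$, and there is no reason why, after any sequence of rotations, one would reach ``$N(u,G_2)\subseteq V(P)$'': $u$ may be adjacent to many internal vertices of $P$ and to vertices of $G_2-V(P)$ without ever extending $P$, since only adjacency to an endpoint extends the path. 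Second, even granting $N(u,G_2)\subseteq V(P)$, your bound $d(u_1,G_2)+d(u,G_2)\le s$ needs $s<|C_k|$, and your justification is a non sequitur: $\sum_{i=1}^{k-1}|C^*_i|\le|G|-|C_k|$ gives $|G_2|\ge|C_k|$, not $|G_2|\le|C_k|$, so $s\ge|C_k|$ does not force $P$ to cover $G_2$ (a long path contains no cycle, so nothing prevents $s\ge|C_k|$). The correct bound for the endpoint, $d(u_1,G_2)\le|C_k|/2-1$ (otherwise a cycle of length at least $|C_k|$ sits on $P$), you never isolate, and for a generic uncovered $u$ there is simply no a priori bound on $d(u,G_2)$.

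This is exactly the difficulty the paper's proof is built around: it does not take an arbitrary uncovered vertex, but uses a maximum matching $M$ in $G_2-V(P)$ and maximal $M$-alternating paths to select special vertices $y_1,y_2$ (or $x,y$) whose neighbours in $G_2-V(P)$ all lie on an alternating path $Q$ joining them. Then either some $d(y_i,G_2)\le|C_k|/2$, and Claim~2 yields your swap contradiction with (ii), or both $y_1,y_2$ have degree at least $|C_k|/2+1$ in $G_2$, hence at least $|C_k|/2+1-d(y_i,Q)\ge 2$ neighbours on $P$, and a segment of $P$ between suitable neighbours of $y_1$ and $y_2$ together with $Q$ produces a cycle of length at least $|C_k|$, i.e.\ a subdivision of $C_k$, contradicting the main assumption. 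Your proposal has no substitute for this second branch, and your fallback (applying Claim~2 to the pair $u_1,u_s$) does not close it: the resulting exchange replaces $u_s$ by a vertex of $C^*_i$, giving a path of the same length $s$, so none of (i)--(iii) is violated. As written, the argument therefore has a real hole precisely where the paper's matching/alternating-path construction does the work.
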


\begin{proof}
Suppose that $P$ is not a Hamilton path of $G_2$. Then from the
choice of $P$ and the fact that $G_2$ does not contain a subdivision
of $C_k$, we have
$$
d(u_1,G_2)=d(u_1,P)\leq \frac{|C_k|}{2}-1
$$
and
$$
d(u_s,G_2)=d(u_s,P)\leq \frac{|C_k|}{2}-1.
$$
Let $G'_2=G_2-V(P)$ and $M=\{a_1b_1, a_2b_2,\ldots, a_mb_m\}$ be a
maximum matching in $G'_2$ with $\{a_1, a_2, \ldots, a_m\}\subseteq
X$. We distinguish two cases as follows.

\medskip
\noindent{\bf Case 1.} $M$ is not a perfect matching of $G'_2$.
\medskip

First note that $G_2$ has an even number of vertices. If $u_s\in Y$,
then $P$ has also an even number of vertices. Thus, $|G'_2|$ is even
and $|V(G'_2)\cap X|=|V(G'_2)\cap Y|$. If $u_s\in X$, then
$|V(G'_2)\cap Y|=|V(G'_2)\cap X|+1$. Since $M$ is not a perfect
matching of $G'_2$, we have $(V(G'_2)\cap Y)\setminus
\{b_1,b_2,\ldots,b_m\}\neq \emptyset$.

Let $y_0$ be a vertex in $(V(G'_2)\cap Y)\setminus
\{b_1,b_2,\ldots,b_m\}$. Choose $P_1$ as a longest $M$-alternating
path in $G'_2$ starting from $y_0$. Then $P_1$ must end at a vertex
$y_1$ with $y_1\in \{y, b_1, \ldots, b_m\}$. Let $M'=M-E(P_1)$ and
$G''_2=G'_2-V(P_1)+y$. Choose $P_2$ as a longest $M'$-alternating
path in $G''_2$ starting from $y_0$. Then $P_2$ must end at a vertex
$y_2\in \{y,b_1, \ldots, b_m\}$. Therefore, $Q=P_1\cup P_2$ is a
path from $y_1$ to $y_2$ in $G'_2$ such that $d(y_1, G'_2)=d(y_1,Q)$
and $d(y_2, G'_2)=d(y_2,Q)$. Since $G_2$ does not contain a
subdivision of $C_k$, we have
$$
d(y_1,Q)\leq \frac{|C_k|}{2}-1
$$
and
$$
d(y_2,Q)\leq \frac{|C_k|}{2}-1.
$$

If $d(y_1,G_2)\leq |C_k|/2$, then by $d(u_1,G_2)+d(y_1,G_2)<|C_k|$
and Claim 2 we know that $d(u_1,C^*_i)+d(y_1,C^*_i)\geq |C_i|-1$ for
some $i$ with $1\leq i\leq k-1$, and there exists a vertex $v\in
N(u_1,C^*_i)$ such that $C^*_i+y_1-v$ contains $C^*_i$. This
contradicts the choice of $C^*_1, C^*_2, \ldots, C^*_{k-1}$ in (ii).
Hence, we have $d(y_1,G_2)\geq |C_k|/2+1$. Similarly we have
$d(y_2,G_2)\geq |C_k|/2+1$. This implies that
$$
d(y_1,P)=d(y_1,G_2)-d(y_1,G'_2)\geq |C_k|/2+1-d(y_1,Q)\geq 2
$$
and
$$
d(y_2,P)=d(y_2,G_2)-d(y_2,G'_2)\geq |C_k|/2+1-d(y_2,Q)\geq 2.
$$
It is not difficult to find a segment of $P$ with at least
$$
2min\{\frac{|C_k|}{2}+1-d(y_1,Q), \frac{|C_k|}{2}+1-d(y_2,Q)\}-1
$$
vertices such that the two end-vertices of this segment are adjacent
to $y_1$ and $y_2$, respectively. Since $Q$ has at least
$$
2max\{d(y_1,Q), d(y_2,Q)\}
$$
vertices, we can obtain a cycle with at least $|C_k|$ vertices in
$G[V(P\cup Q)]$, which can be seen as a subdivision of $C_k$. This
contradicts our assumption that $G$ contains no subdivisions of $H$.

\medskip
\noindent{\bf Case 2.} $M$ is a perfect matching of $G'_2$.
\medskip

Choose $P_3$ as a longest $M$-alternating path in $G'_2$ such that
the first edge of $P_3$ is in $M$. Denote the initial vertex and the
final vertex of $P_3$ by $x$ and $y$, respectively. Then we have
$x\in\{a_1,a_2,\ldots ,a_m\}$ and $y\in \{b_1,b_2,\ldots, b_m\}$.
Since $G_2$ does not contain a subdivision of $C_k$, we know that
$$
d(x,P_3)\leq \frac{|C_k|}{2}-1
$$
and
$$
d(y,P_3)\leq \frac{|C_k|}{2}-1.
$$
Clearly, $u_s\in Y$. If $d(x,G_2)\leq |C_k|/2$, then it follows from
$d(u_s,G_2)+d(x,G_2)<|C_k|$ and Claim 2 that
$d(u_s,C^*_i)+d(x,C^*_i)\geq |C_i|-1$ for some $i$ with $1\leq i\leq
k-1$, and there exists a vertex $v\in N(u_s,C^*_i)$ such that
$C^*_i+x-v$ contains $C^*_i$. This contradicts the choice of $C^*_1,
C^*_2, \ldots, C^*_{k-1}$ in (ii). Hence, we have $d(x,G_2)\geq
|C_k|/2+1$. Similarly, we can show that $d(y,G_2)\geq |C_k|/2+1$.
This means that
$$
d(x,P)\geq |C_k|/2+1-d(x,P_3)\geq 2
$$
and
$$
d(y,P)\geq |C_k|/2+1-d(y,P_3)\geq 2.
$$
The rest of the proof of this case is just as same as that of Case
1.
\end{proof}

Since $G_2$ does not contain a subdivision of $C_k$ and $P$ is a
Hamilton path of $G_2$, we can immediately get
\begin{eqnarray}
 d(u_1,G_2)\leq |C_k|/2-1,& &d(u_s,G_2)\leq
|C_k|/2-1
\end{eqnarray}

\begin{eqnarray}
 d(u_2,G_2)\leq |C_k|/2,& &d(u_{s-1},G_2)\leq
|C_k|/2
\end{eqnarray}

\noindent Note $u_s\in Y$. Then, by (1) and Claim 2, we know that
there exists a component $C^*_p$ for some $p$ with $1\leq p\leq k-1$
such that
$$
d(u_1, C^*_p)+ d(u_s, C^*_p)\geq |C_p|-1
$$
and
$$
|C^*_p|=|C_p|.
$$
Without loss of generality, we may assume $d(u_1, C^*_p)=|C_p|/2$
and $d(u_s, C^*_p)\geq |C_p|/2-1$. Let $x^*$ be a vertex of
$V(C^*_p)\cap X$ such that $u_s$ is adjacent to all vertices in
$(V(C^*_p)\cap X)\setminus\{x^*\}$.  Let $y^*$ be a vertex of
$V(C^*_p)\cap Y$ such that $x^*$ and $y^*$ are not adjacent on
$C^*_p$.

We have the following Claim 4 and Claim 5.

\begin{claim}
For all $z\in \{u_1, u_{s-1}, u_s, x^*,y^* \}$ we have
$$
d(z, G_2)+d(z, C^*_p)\leq \frac{|C_k|}{2}+\frac{|C_p|}{2}-1,
$$
and for $u_2$ we have
$$
d(u_2, G_2)+d(u_2, C^*_p)< \frac{|C_k|}{2}+\frac{|C_p|}{2}-1.
$$
\end{claim}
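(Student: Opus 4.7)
For every $z$ under consideration, Claim~1 already delivers $d(z,C^*_p)\le|C_p|/2$, so the job reduces to controlling $d(z,G_2)$. When $z\in\{u_1,u_s\}$, equation~(1) directly gives $d(z,G_2)\le|C_k|/2-1$, and the desired bound $d(z,G_2)+d(z,C^*_p)\le|C_k|/2+|C_p|/2-1$ follows at once.

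The main work lies in the internal cases $z\in\{u_{s-1},u_2\}$, where (2) only gives $d(z,G_2)\le|C_k|/2$, leaving a naive sum of $|C_k|/2+|C_p|/2$. I would argue that equality in both summands is impossible. If $d(u_{s-1},C^*_p)=|C_p|/2$, then $u_{s-1}$ is adjacent to every vertex of $V(C^*_p)\cap Y$, so for any $x\in V(C^*_p)\cap X$ the cycle $C^*_p-x+u_{s-1}$ has length $|C_p|$ and can be used to replace $C^*_p$ without changing $|G_1|$; hence (i) is preserved. I would then pick $x$ of minimum degree in $G[V(C^*_p)]$: if this degree is less than $|C_p|/2$, then $\beta(G_1)$ strictly grows, violating (iii); otherwise $G[V(C^*_p)]\cong K_{|C_p|/2,|C_p|/2}$, and the supposed $d(u_{s-1},G_2)=|C_k|/2$ equips me with enough neighbours inside $P$ to perform P\'osa-style rotations whose rearranged Hamilton path of $G_2-u_{s-1}$ can then be extended through $x$, producing in the new $G_2=G_2-u_{s-1}+x$ a path of length $s+1$, contradicting (ii). The same reasoning handles $u_2$, giving the non-strict form. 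To obtain the strict inequality promised for $u_2$, I would apply Claim~2 to the pair $(u_1,u_2)$: since $d(u_1,G_2)+d(u_2,G_2)\le|C_k|-1<|C_k|$, Claim~2 supplies some $C^*_j$ with a substitution vertex $v$, and because $u_1$ is already an endpoint of $P$ with $d(u_1,C^*_p)=|C_p|/2$, the substitution creates a path of length $s+1$ in the new $G_2$ unless $d(u_2,C^*_p)\le|C_p|/2-1$, which is exactly the extra unit required.

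The cycle-vertex cases $z\in\{x^*,y^*\}$ follow a complementary pattern. I would show $d(x^*,G_2)\le|C_k|/2-1$ by contradiction: if $x^*$ had $|C_k|/2$ neighbours in $V(G_2)\cap Y=\{u_2,u_4,\ldots,u_s\}$, then by pigeonhole two of them, $u_{2j}$ and $u_{2j'}$ with $j'-j\ge|C_k|/2-1$, close a cycle $x^*u_{2j}\cdots u_{2j'}x^*$ of length at least $|C_k|$ inside $G[V(P)\cup\{x^*\}]$, yielding a subdivision of $C_k$ disjoint from $V(C^*_p)\setminus\{x^*\}$. I would then rebuild a subdivision of $C_p$ on $V(C^*_p)\setminus\{x^*\}$ by closing the path $C^*_p-x^*$ through $u_s$ (adjacent to every $X$-vertex of $C^*_p$ except $x^*$) and $u_1$ (adjacent to every $Y$-vertex of $C^*_p$), choosing the closure to avoid the vertices used in the $C_k$-subdivision. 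Together with the remaining $C^*_i$ this gives a subdivision of $H$, contradicting the standing assumption. The argument for $y^*$ is analogous, with the prescribed non-adjacency of $x^*$ and $y^*$ on $C^*_p$ guiding the rerouting.

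The main obstacle I expect is the extremal case inside the $u_{s-1}$ argument: when $G[V(C^*_p)]$ is forced to be the complete bipartite graph $K_{|C_p|/2,|C_p|/2}$, the $\beta$-based contradiction against (iii) is unavailable, and one has to execute a careful P\'osa rotation and guarantee that some rotation-endpoint is adjacent to the kicked-out vertex $x$. Handling the additional strict inequality for $u_2$ inherits the same subtlety layered on top of the Claim~2 machinery.
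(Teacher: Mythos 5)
Your treatment of $u_1,u_s$ (via (1) and Claim 1) and of $x^*,y^*$ (a long cycle through $P$ plus the substitution $C^*_p+u_1-x^*$, resp.\ $C^*_p+u_s-y^*$, giving a forbidden subdivision of $H$) is essentially the paper's argument and is fine, apart from the stray mention of closing through $u_s$ in the $x^*$ case (only $u_1$ is needed there, and $u_s$ is not adjacent to the $Y$-neighbours of $x^*$ anyway). The genuine gap is in your handling of $u_{s-1}$ and $u_2$. After Claim 3, $P$ is a Hamilton path of $G_2$, and your exchange replaces one vertex of $P$ by one vertex of $C^*_p$, so the new $G_2$ has exactly $s$ vertices; a path of order (or length) $s+1$ in it is impossible, so the contradiction with (ii) you aim for in both the $u_{s-1}$ rotation argument and the $u_2$ ``extra unit'' argument simply cannot arise. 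Likewise, your $\beta$-branch against (iii) is incomplete for the same structural reason: (iii) is subject to (ii), so before comparing $\beta(G_1)$ you must exhibit a path of order $s$ in $G_2-u_{s-1}+x$ (equivalently, reconnect $u_1\cdots u_{s-2}$, $u_s$ and $x$ into one spanning path), and that is precisely the unproved step; the obstacle is not confined to the extremal $K_{|C_p|/2,|C_p|/2}$ case.

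Two further problems: Claim 2 only produces \emph{some} $C^*_i$ with the substitution property, not $C^*_p$, so it cannot by itself bound $d(u_2,C^*_p)$; and even if your plan delivered $d(u_2,C^*_p)\le|C_p|/2-1$, combining it with $d(u_2,G_2)\le|C_k|/2$ gives only the non-strict bound $|C_k|/2+|C_p|/2-1$, not the strict inequality the claim asserts for $u_2$ --- you are off by one. The paper avoids all of this with a direct construction that you miss: if $u_{s-1}$ has a neighbour $v\in(V(C^*_p)\cap Y)\setminus\{x^{*+},x^{*-}\}$, then $u_1\cdots u_{s-1}vu_1$ is a cycle of length $s\ge|C_k|$ in $G_2+v-u_s$ (using that $u_1$ is adjacent to every vertex of $V(C^*_p)\cap Y$), while $C^*_p+u_s-v$ contains $C_p$ (using that $u_s$ is adjacent to every vertex of $(V(C^*_p)\cap X)\setminus\{x^*\}$ and $v$ is not a cycle-neighbour of $x^*$); this forbidden subdivision of $H$ forces $d(u_{s-1},C^*_p)\le2$, and the analogous exchange at $u_1$ forces $d(u_2,C^*_p)\le1$. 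With $|C_p|\ge6$ and (2), these small absolute bounds give the stated inequality for $u_{s-1}$ and the strict inequality for $u_2$; your ruling-out-simultaneous-equality strategy is both harder and, as set up, not sound.
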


\begin{proof}
The case $z=u_1$ or $z=u_s$ follows from (1) and the fact
$|C^*_p|=|C_p|$ immediately.

Suppose $z=x^*$. Then we have $u_1x^*\not \in E(G)$. If $x^*$ is
adjacent to at least $|C_k|/2$ vertices in $P$, then $P+x^*-u_1$
contains a subdivision of $C_k$ and $C^*_p+u_1-x^*$ contains $C_p$.
This contradicts our assumption that $G$ contains no subdivisions of
$H$. Thus, we have
$$
d(x^*, G_2)+d(x^*, C^*_p)\leq \frac{|C_k|}{2}+\frac{|C_p|}{2}-1.
$$
Similarly, we can prove
$$
d(y^*, G_2)+d(y^*, C^*_p)\leq \frac{|C_k|}{2}+\frac{|C_p|}{2}-1.
$$

Suppose $z=u_{s-1}$. If $u_{s-1}$ is adjacent to a vertex $v$ in
$(V(C^*_p)\cap Y)\setminus\{x^{*+},x^{*-}\}$, then from $u_{s-1}\in
X$ and $|C^*_p|\geq 6$ we know that $G_2+v-u_s$ contains a
subdivision of $C_k$ and $C^*_p+u_s-v$ contains $C_p$. This
contradicts our assumption that $G$ contains no subdivisions of $H$.
Together with (2), we can obtain
$$
d(u_{s-1}, G_2)+d(u_{s-1}, C^*_p)\leq \frac{|C_k|}{2}+2\leq
\frac{|C_k|}{2}+\frac{|C_p|}{2}-1.
$$

Note that $u_2\in Y$. If $u_2$ is adjacent to a vertex $v$ in
$(V(C^*_p)\cap X)\setminus\{x^*\}$, then $G_2+v-u_1$ contains a
subdivision of $C_k$ and $C^*_p+u_1-v$ contains $C_p$, a
contradiction. Thus, together with (2) and $|C_p|\geq 6$, we have
$$
d(u_2, G_2)+d(u_2, C^*_p)\leq \frac{|C_k|}{2}+1<
\frac{|C_k|}{2}+\frac{|C_p|}{2}-1.
$$
\end{proof}

\begin{claim}
Let $S=\{u_1,u_2,u_{s-1},u_s,x^*,y^*\}$. Then there exists a $C^*_q$
with $q\neq p$ and $1\leq q\leq k-1$ such that
$$
\sum\limits_{z\in S} d(z, C^*_q)\geq 3|C_q|-5
$$
and $|C^*_q|=|C_q|$.
\end{claim}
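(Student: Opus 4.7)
The plan is a global degree-counting argument combining the minimum-degree hypothesis with the upper bounds from Claim 4. Summing $d(z,G)$ over the six vertices of $S$ yields
$$\sum_{z\in S} d(z,G)\ge 6(n/2-k+1)=3n-6k+6.$$
Decomposing this sum according to $V(G)=V(G_2)\cup V(C^*_p)\cup\bigcup_{q\neq p} V(C^*_q)$ and applying Claim 4 (the five vertices $u_1,u_{s-1},u_s,x^*,y^*$ each contribute at most $|C_k|/2+|C_p|/2-1$ to $d(z,G_2)+d(z,C^*_p)$, while $u_2$ contributes at most $|C_k|/2+|C_p|/2-2$ because of the strict inequality) produces
$$\sum_{z\in S}\bigl(d(z,G_2)+d(z,C^*_p)\bigr)\le 3|C_k|+3|C_p|-7,$$
and hence
$$\sum_{q\neq p,\,1\le q\le k-1}\sum_{z\in S} d(z,C^*_q)\ge 3n-3|C_k|-3|C_p|-6k+13.$$

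I would then suppose for contradiction that no $q$ meets both conclusions of the claim. For each $q\neq p$ with $1\le q\le k-1$, the negation forces either $|C^*_q|>|C_q|$, or $|C^*_q|=|C_q|$ with $\sum_{z\in S} d(z,C^*_q)\le 3|C_q|-6$. The target is the uniform upper bound $\sum_{z\in S} d(z,C^*_q)\le 3|C_q|-6$ in both cases. The four vertices of $S$ lying in $V(G_2)$ contribute at most $|C_q|/2$ by Claim 1, strictly less when $|C^*_q|>|C_q|$. The delicate case is $v\in\{x^*,y^*\}\subseteq V(C^*_p)$: one must show $d(v,C^*_q)<|C_q|/2$ when $|C^*_q|>|C_q|$. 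I would derive this by adapting the contradiction scheme of Claim 1, but after transplanting $v$ into a shorter subdivision $\tilde C_q$ of $C_q$ one has to rebuild $C^*_p-v$ into a fresh $C_p$-subdivision; this should be achievable using that $u_s$ is adjacent to every vertex of $(V(C^*_p)\cap X)\setminus\{x^*\}$ and that $y^*$ is non-adjacent to $x^*$ along $C^*_p$, contradicting the minimality of $|G_1|$ in choice (i).

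Granted the uniform bound $3|C_q|-6$, summation over $q\neq p$ with $1\le q\le k-1$ gives at most $3(n-|C_p|-|C_k|)-6(k-2)$, so
$$\sum_{z\in S} d(z,G)\le (3|C_k|+3|C_p|-7)+3(n-|C_p|-|C_k|)-6(k-2)=3n-6k+5,$$
contradicting the lower bound $3n-6k+6$. By pigeonhole some $q$ therefore satisfies $\sum_{z\in S} d(z,C^*_q)\ge 3|C_q|-5$, and the same uniform bound in the negated case forces $|C^*_q|=|C_q|$ for that $q$. The hard part is the rerouting step that controls $d(x^*,C^*_q)$ and $d(y^*,C^*_q)$ when $C^*_q$ is a proper subdivision; the rest is bookkeeping, and the crucial slack of one unit comes entirely from the extra $-1$ contributed by $u_2$ in Claim 4.
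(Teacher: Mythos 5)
Your proposal is correct and follows essentially the same route as the paper: the identical degree sum over $S$ against $\delta(G)\geq n/2-k+1$, the Claim 4 bound with the one-unit slack from $u_2$, the uniform bound $3|C_i|-6$ on the remaining components, and the same transplant-and-repair step for $x^*,y^*$ (move the vertex into a shorter subdivision of $C_q$ and restore $C_p$ via $C^*_p+u_1-x^*$ or $C^*_p+u_s-y^*$, contradicting choice (i)), the only cosmetic difference being that the paper deduces $|C^*_q|=|C_q|$ after the pigeonhole via the three pair sums rather than folding it into a uniform bound beforehand. Note only that for bounding $d(x^*,C^*_q)$ the repair uses $u_1$'s adjacency to all of $V(C^*_p)\cap Y$ (from $d(u_1,C^*_p)=|C_p|/2$), while $u_s$ and the non-adjacency of $x^*,y^*$ on $C^*_p$ serve the $y^*$ case.
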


\begin{proof}
By Claim 4, we can get that
$$
\sum\limits_{z\in S}d(z,C^*_p)+\sum\limits_{z\in
S}d(z,G_2)<3|C_k|+3|C_p|-6.
$$
If $\sum_{z\in S}d(z,C^*_i)\leq 3|C_i|-6$ for all $i$ with $1\leq
i\leq k-1$ and $i\neq p$, then we have
\begin{eqnarray}
\nonumber & &\sum\limits_{z\in S}d(z,G)\\ \nonumber
&=&\sum\limits_{z\in S}d(z,G_1-C^*_p)+\sum\limits_{z\in
S}d(z,C^*_p)+\sum\limits_{z\in
S}d(z,G_2)\\
\nonumber &=& \sum\limits_{i=1,i\neq p}^{k-1}\sum\limits_{z\in S}d(z,C^*_i)+\sum\limits_{z\in S}d(z,C^*_p)+\sum\limits_{z\in S}d(z,G_2) \\
\nonumber &<& 3(n-|C_k|-|C_p|)-6k+12+3|C_k|+3|C_p|-6 \\
\nonumber &=& 3n-6k+6.
\end{eqnarray}
This contradicts the condition $\delta(G)\geq n/2-k+1$. Hence, there
exists a $C^*_q$ for some $q$ with $1\leq q\leq k-1$ and $q\neq p$
such that
$$
\sum\limits_{z\in S} d(z, C^*_q)\geq 3|C_q|-5.
$$
Then, at least one of the following holds.
$$
d(u_1, C^*_q)+d(u_s, C^*_q)\geq |C_q|-1,
$$
$$d(u_2, C^*_q)+d(u_{s-1},
C^*_q)\geq |C_q|-1
$$
and
$$
d(x^*, C^*_q)+d(y^*, C^*_q)\geq |C_q|-1.
$$
If $d(u_1, C^*_q)+d(u_s, C^*_q)\geq |C_q|-1$ or $d(u_2,
C^*_q)+d(u_{s-1},C^*_q)\geq |C_q|-1$, then by Claim 2 we have
$|C^*_q|=|C_q|$. If $ d(x^*, C^*_q)+d(y^*, C^*_q)\geq |C_q|-1$, then
$d(x^*, C^*_q)=|C_q|/2$ or $d(y^*, C^*_q)=|C_q|/2$. Since
$C^*_p+u_1-x^*$ or $C^*_p+u_s-y^*$ contains $C_p$, we can prove
$|C^*_q|=|C_q|$ as in Claim 1.
\end{proof}

In order to complete the proof of Theorem 3, we will show that
$G[V(G_2)\cup V(C^*_p)\cup V(C^*_q)$ contains a subdivision of
$C_k\cup C_p\cup C_q$, which contradicts our assumption that $G$
contains no a subdivision of $H$. Note that $d(u_1, C^*_q)+d(u_s,
C^*_q)\leq |C_q|$, $d(u_2, C^*_q)+d(u_{s-1}, C^*_q)\leq |C_q|$ and $
d(x^*, C^*_q)+d(y^*, C^*_q)\leq |C_q|$.

We consider the two cases as follows.

\medskip
\noindent{\bf Case 1.} $d(u_1, C^*_q)+d(u_s, C^*_q)\geq |C_q|-1$.
\medskip

Suppose that $d(u_1, C^*_q)+d(u_s, C^*_q)=|C_q|$. Then by Claim 5 we
have $d(u_2, C^*_q)+d(u_{s-1}, C^*_q)\geq |C_q|-5\geq 1$. This
implies that either $u_1$ and $u_{s-1}$ have at least one common
neighbor in $C^*_q$ or $u_2$ and $u_s$ have at least one common
neighbor in $C^*_q$. By the symmetry, we assume that $v\in V(C^*_q)$
with $u_1v,u_{s-1}v\in E(G)$. It is easy to see that $G_2+v-u_s$
contains a subdivision of $C_k$ and $C^*_q+u_s-v$ contains a
subdivision of $C_q$.

Suppose that $d(u_1, C^*_q)+d(u_s, C^*_q)=|C_q|-1$. By the symmetry,
we assume that $d(u_1, C^*_q)=|C_q|/2$ and $d(u_s,
C^*_q)=|C_q|/2-1$. Let $v$ be the vertex in $V(C^*_q)\cap X$ with
$u_sv\not\in E(G)$. If $|C_q|\geq 8$, then by Claim 5 we have
$d(u_2, C^*_q)+d(u_{s-1}, C^*_q)\geq |C_q|-4\geq 4$. So, we can
deduce that either $u_2$ has at least one neighbor in
$(V(C^*_q)\setminus \{v\})\cap X$ or $u_{s-1}$ has at least one
neighbor in $(V(C^*_q)\setminus \{v^+,v^-\})\cap Y$. If $u_2$ has at
least one neighbor in $(V(C^*_q)\setminus \{v\})\cap X$, say $v_x$,
then $G_2+v_x-u_1$ contains a subdivision of $C_k$ and
$C^*_q+u_1-v_x$ contains a subdivision of $C_q$. If $u_{s-1}$ has at
least one neighbor in $(V(C^*_q)\setminus \{v^+,v^-\})\cap Y$, say
$v_y$, then $G_2+v_y-u_s$ contains a subdivision of $C_k$ and
$C^*_q+u_s-v_y$ contains a subdivision of $C_q$. Thus, we have
$|C_q|=6$. If $u_2$ or $u_{s-1}$ has a neighbor in
$V(C^*_q)\setminus \{v^-,v,v^+\}$, then the proof is as same as that
of the case $|C_q|\geq 8$. We may assume that $N(u_2,C^*_q)\cup
N(u_{s-1},C^*_q)\subseteq \{v^-,v,v^+\}$. Since $d(u_2,
C^*_q)+d(u_{s-1}, C^*_q)\geq |C_q|-4=2$, we have $u_{s-1}v^-\in
E(G)$ or $u_{s-1}v^+\in E(G)$. Without loss of generality, we assume
that $u_{s-1}v^+\in E(G)$. If $u_2v\not\in E(G)$, then $d(u_2,
C^*_q)=0$ and $d(u_{s-1}, C^*_q)= |C_q|-4$. By Claim 5 we have
$d(x^*, C^*_q)+d(y^*, C^*_q)=|C_q|$. It follows from the choice of
$y^*$ in $C^*_p$ that $G_2+v^+-u_s$ contains a subdivision of $C_k$,
$C^*_p+u_s-y^*$ contains a subdivision of $C_p$ and $C^*_q+y^*-v^+$
contains a subdivision of $C_q$. If $u_2v\in E(G)$, then we can
deduce that $v$ is adjacent to every vertex in $V(C^*_q)\cap Y$. If
not, then $P+v-u_1$ is a path of order $|P|$ and $C^*_q+u_1-v$
contains a subdivision of $C_q$ such that $|C^*_q+u_1-v|=|C^*_q|$
and $|E(G[V(C^*_q+u_1-v)])|>|E(G[V(C^*_q)])|$, which contradicts the
choice of $C^*_1, C^*_2, \ldots, C^*_{k-1}$ in (iii). Then it is not
difficult to see that $G_2+v^+-u_s$ contains a subdivision of $C_k$
and $C^*_q+u_s-v^+$ contains a subdivision of $C_q$.

\medskip
\noindent{\bf Case 2.} $d(u_1, C^*_q)+d(u_s, C^*_q)\leq |C_q|-2$.
\medskip

We will consider three subcases as follows.

\medskip
\noindent{\bf Case 2.1.} $d(u_1, C^*_q)+d(u_s, C^*_q)\leq |C_q|-4$.
\medskip

By Claim 5 we have
$$
\sum\limits_{z\in \{u_2,u_{s-1},x^*,y^*\}} d(z, C^*_q)\geq 2|C_q|-1.
$$
This implies that either $d(u_2, C^*_q)+d(u_{s-1}, C^*_q)=|C_q|$ and
$d(x^*, C^*_q)+d(y^*, C^*_q)\geq|C_q|-1$ or $d(u_2,
C^*_q)+d(u_{s-1}, C^*_q)\geq|C_q|-1$ and $d(x^*, C^*_q)+d(y^*,
C^*_q)=|C_q|$. If $d(u_2, C^*_q)+d(u_{s-1}, C^*_q)=|C_q|$ and
$d(x^*, C^*_q)+d(y^*, C^*_q)\geq|C_q|-1$, then there exists an edge
$v_xv_y\in E(C^*_q)$ such that $x^*$ is adjacent to every vertex in
$(V(C^*_q)\setminus \{v_y\})\cap Y$ and $y^*$ is adjacent to every
vertex in $(V(C^*_q)\setminus \{v_x\})\cap X$. Thus we can get that
$G_2+v_x+v_y-u_1-u_s$ contains a subdivision of $C_k$,
$C^*_p+u_1+u_s-x^*-y^*$ contains a subdivision of $C_p$ and
$C^*_q+x^*+y^*-v_x-v_y$ contains a subdivision of $C_q$. The case
$d(u_2, C^*_q)+d(u_{s-1}, C^*_q)\geq|C_q|-1$ and $d(x^*,
C^*_q)+d(y^*, C^*_q)=|C_q|$ can be proved similarly.

\medskip
\noindent{\bf Case 2.2.} $d(u_1, C^*_q)+d(u_s, C^*_q)=|C_q|-3$.
\medskip

By Claim 5, we have $d(x^*, C^*_q)+d(y^*, C^*_q)+d(u_2,
C^*_q)+d(u_{s-1}, C^*_q)\geq 2|C_q|-2$. This implies that $d(x^*,
C^*_q)+d(y^*, C^*_q)=|C_q|$, $d(u_2, C^*_q)+d(u_{s-1},
C^*_q)=|C_q|$, or $d(x^*, C^*_q)+d(y^*, C^*_q)=|C_q|-1$ and $d(u_2,
C^*_q)+d(u_{s-1}, C^*_q)=|C_q|-1$.  We consider three subcases as
follows.

\medskip
\noindent{\bf Case 2.2.1.} $d(x^*, C^*_q)+d(y^*, C^*_q)=|C_q|$.
\medskip

Then by Claim 5 we have $d(u_2, C^*_q)+d(u_{s-1}, C^*_q)\geq
|C_q|-2>|C_q|/2$. It is not difficult to see that there exists an
edge $v_xv_y\in E(C^*_q)$ with $u_2v_x,u_{s-1}v_y\in E(G)$ such that
$G_2+v_x+v_y-u_1-u_s$ contains a subdivision of $C_k$,
$C^*_p+u_1+u_s-x^*-y^*$ contains a subdivision of $C_p$ and
$C^*_q+x^*+y^*-v_x-v_y$ contains a subdivision of $C_q$.

\medskip
\noindent{\bf Case 2.2.2.} $d(u_2, C^*_q)+d(u_{s-1}, C^*_q)=|C_q|$.
\medskip

Then by Claim 5 we have $d(x^*, C^*_q)+d(y^*, C^*_q)\geq |C_q|-2$.

First, we assume that $|C_q|\geq 8$. Then we can see that $d(u_1,
C^*_q)\geq 1$. If $d(y^*, C^*_q)\geq|C_q|/2-1$, then we can deduce
that there exists an edge $v_xv_y\in E(C^*_q)$ with
$u_2v_x,u_{s-1}v_y\in E(G)$ such that $G_2+v_x+v_y-u_1-u_s$ contains
a subdivision of $C_k$, $C^*_p+u_s-y^*$ contains a subdivision of
$C_p$ and $C^*_q+u_1+y^*-v_x-v_y$ contains a subdivision of $C_q$.
If $d(y^*, C^*_q)=|C_q|/2-2$, then $d(x^*, C^*_q)=|C_q|/2$ and we
can also deduce that there exists an edge $v_xv_y\in E(C^*_q)$ with
$u_2v_x,u_{s-1}v_y\in E(G)$ such that $G_2+v_x+v_y-u_1-u_s$ contains
a subdivision of $C_k$, $C^*_p+u_1+u_s-x^*-y^*$ contains a
subdivision of $C_p$ and $C^*_q+x^*+y^*-v_x-v_y$ contains a
subdivision of $C_q$.

Next, we assume that $|C_q|=6$. If $d(u_1,C^*_q)=3$, then by
$d(y^*,C^*_q)\geq 1$ and $u_1y^*\in E(G)$ we can deduce that there
exists an edge $v_xv_y\in E(C^*_q)$ such that $u_2v_x,u_{s-1}v_y\in
E(G)$ and $G_2+v_x+v_y-u_1-u_2$ contains a subdivision of $C_k$,
$C^*_p+u_s-y^*$ contains a subdivision of $C_p$ and
$C^*_q+u_1+y^*-v_x-v_y$ contains a subdivision of $C_q$. So we
assume that $d(u_1,C^*_q)\leq 2$. Then $d(u_s,C^*_q)\geq 1$. Let
$v'_x$ be a vertex in $C^*_q$ with $u_sv'_x\in E(G)$. We claim that
if $d(x^*,C^*_q)=3$, then $x^*y^*\in E(G)$. If $x^*y^*\not\in E(G)$,
then we can  get that $P+v'_x-u_1$ is a path of order $|P|$,
$C^*_q+x^*-v'_x$ contains a subdivision of $C_q$ with
$|C^*_q+x^*-v'_x|=|C_q|$ and $|E(G[V(C^*_q+x^*-v'_x)])|\geq
|E(G[V(C^*_q)])|$, and $C^*_p+u_1-x^*$ contains a subdivision of
$C_p$ with $|C^*_p+u_1-x^*|=|C_p|$ and
$|E(G[V(C^*_p+u_1-x^*)])|>|E(G[V(C^*_p)])|$, which contradicts the
choice of $C^*_1, C^*_2, \ldots, C^*_{k-1}$ in (iii). Thus, if
$d(x^*,C^*_q)=3$, then by $d(y^*,C^*_q)\geq 1$, it is not difficult
to see that there exists an edge $v_xv_y\in E(C^*_q)$ such that
$u_2v_x,u_{s-1}v_y\in E(G)$ and $G_2+v_x+v_y-u_1-u_2$ contains a
subdivision of $C_k$, $C^*_p+u_1+u_s-x^*-y^*$ contains a subdivision
of $C_p$ and $C^*_q+x^*+y^*-v_x-v_y$ contains a subdivision of
$C_q$. If $d(y^*,C^*_q)=3$ and $d(u_1,C^*_q)\geq 1$, then we can
also obtain that there exists an edge $v_xv_y\in E(C^*_q)$ such that
$u_2v_x,u_{s-1}v_y\in E(G)$ and $G_2+v_x+v_y-u_1-u_2$ contains a
subdivision of $C_k$, $C^*_p+u_s-y^*$ contains a subdivision of
$C_p$ and $C^*_q+u_1+y^*-v_x-v_y$ contains a subdivision of $C_q$.
If $d(y^*,C^*_q)=3$ and $d(u_s,C^*_q)=3$, then we claim that
$x^*y^*\in E(G)$. If $x^*y^*\not\in E(G)$, then we can  get that
$P+v'_y+x^*-u_1-u_s$ is a path of order $|P|$, where $v'_y\in
N(x^*,C^*_q)$,  $C^*_q+u_s-v'_y$ contains a subdivision of $C_q$
with $|C^*_q+u_s-v'_y|=|C_q|$ and $|E(G[V(C^*_q+u_s-v'_y)])|\geq
|E(G[V(C^*_q)])|$, and $C^*_p+u_1-x^*$ contains a subdivision of
$C_p$ with $|C^*_p+u_1-x^*|=|C_p|$ and
$|E(G[V(C^*_p+u_1-x^*)])|>|E(G[V(C^*_p)])|$, which contradicts the
choice of $C^*_1, C^*_2, \ldots, C^*_{k-1}$ in (iii). Note that
$d(x^*,C^*_q)\geq 1$. So, there exists an edge $v_xv_y\in E(C^*_q)$
such that $u_2v_x,u_{s-1}v_y\in E(G)$ and $G_2+v_x+v_y-u_1-u_2$
contains a subdivision of $C_k$, $C^*_p+u_1+u_s-x^*-y^*$ contains a
subdivision of $C_p$ and $C^*_q+x^*+y^*-v_x-v_y$ contains a
subdivision of $C_q$. We may assume that
$d(x^*,C^*_q)=d(y^*,C^*_q)=2$. Let $C^*_q=v_1v_2v_3v_4v_5v_6v_1$
with $v_1\in X$. Without loss of generality, we suppose that
$v_2x^*, v_4x^*\in E(G)$. If $v_1y^*,v_3y^*\in E(G)$ or
$v_3y^*,v_5y^*\in E(G)$, by the symmetry, say $v_1y^*,v_3y^*\in
E(G)$, then $G_2+v_5+v_6-u_1-u_2$ contains a subdivision of $C_k$,
$C^*_p+u_1+u_s-x^*-y^*$ contains a subdivision of $C_p$ and
$C^*_q+x^*+y^*-v_5-v_6$ contains a subdivision of $C_q$. Thus let
$v_1y^*,v_5y^*\in E(G)$. If $u_1v_2$ or $u_1v_4\in E(G)$, by the
symmetry, say $u_1u_2\in E(G)$, then $G_2+v_3+v_4-u_1-u_2$ contains
a subdivision of $C_k$, $C^*_p+u_s-y^*$ contains a subdivision of
$C_p$ and $C^*_q+u_1+y^*-v_3-v_4$ contains a subdivision of $C_q$.
This implies that $d(u_1,C^*_q)\leq 1$ and $d(u_s,C^*_q)\geq 2$. If
$u_1v_6,u_sv_1,u_sv_5\in E(G)$, then we can see that $G_2+v_6-u_s$
contains a subdivision of $C_k$ and $C^*_q+u_s-v_6$ contains a
subdivision of $C_q$. Thus we assume that $u_sv_1,u_sv_3\in E(G)$ or
$u_sv_3,u_sv_5\in E(G)$. By the symmetry, we suppose that
$u_sv_1,u_sv_3\in E(G)$. Then it is not difficult to see that
$G_2+v_5+v_6-u_1-u_2$ contains a subdivision of $C_k$,
$C^*_p+u_1-x^*$ contains a subdivision of $C_p$ and
$C^*_q+u_s+x^*-v_5-v_6$ contains a subdivision of $C_q$.

\medskip
\noindent{\bf Case 2.2.3.} $d(x^*, C^*_q)+d(y^*, C^*_q)=|C_q|-1$ and
$d(u_2, C^*_q)+d(u_{s-1}, C^*_q)=|C_q|-1$.
\medskip

If $|C_q|\geq 8$, or $|C_q|=6$ and $N(x^*, C^*_q)\cup N(y^*,
C^*_q)\neq N(u_2, C^*_q)\cup N(u_{s-1}, C^*_q)$, then it is easy to
deduce that there exists an edge $v_xv_y\in E(C^*_q)$ with
$u_2v_x,u_{s-1}v_y\in E(G)$ such that $G_2+v_x+v_y-u_1-u_s$ contains
a subdivision of $C_k$, $C^*_p+u_1+u_s-x^*-y^*$ contains a
subdivision of $C_p$ and $C^*_q+x^*+y^*-v_x-v_y$ contains a
subdivision of $C_q$. Now suppose that $|C_q|=6$ and $N(x^*,
C^*_q)\cup N(y^*, C^*_q)=N(u_2, C^*_q)\cup N(u_{s-1}, C^*_q)$. Let
$C^*_q=v_1v_2v_3v_4v_5v_6v_1$ with $v_1\in X$. Without loss of
generality, we assume that $u_2v_1,v_1y^*\not\in E(G)$. Then we have
$u_2v_3,u_2v_5,v_3y^*,v_5y^*\in E(G)$. If $d(u_1,C^*_q)=3$, then we
can obtain that $G_2+v_3+v_4-u_1-u_s$ contains a subdivision of
$C_k$, $C^*_p+u_s-y^*$ contains a subdivision of $C_p$ and
$C^*_q+u_1+y^*-v_3-v_4$ contains a subdivision of $C_q$. If
$d(u_1,C^*_q)\leq 2$, then $d(u_s,C^*_q)\geq 1$, say $v_x\in
V(C^*_q)$ with $u_sv_x\in E(G)$. We claim that $x^*y^*\in E(G)$. If
not, then by $d(x^*,C^*_q)=3$ we can deduce that $P+v_x-u_1$ is a
path of order $|P|$, $C^*_q+x^*-v_x$ contains a subdivision of $C_q$
with $|C^*_q+x^*-v_x|=|C_q|$ and $|E(G[V(C^*_q+x^*-v_x)])|\geq
|E(G[V(C^*_q)])|$, and $C^*_p+u_1-x^*$ contains a subdivision of
$C_p$ with $|C^*_p+u_1-x^*|=|C_p|$ and
$|E(G[V(C^*_p+u_1-x^*)])|>|E(G[V(C^*_p)])|$, which contradicts the
choice of $C^*_1, C^*_2, \ldots, C^*_{k-1}$ in (iii). Thus we can
see that $G_2+v_3+v_4-u_1-u_s$ contains a subdivision of $C_k$,
$C^*_p+u_1+u_s-x^*-y^*$ contains a subdivision of $C_p$ and
$C^*_q+x^*+y^*-v_3-v_4$ contains a subdivision of $C_q$.

\medskip
\noindent{\bf Case 2.3.} $d(u_1, C^*_q)+d(u_s, C^*_q)=|C_q|-2$.
\medskip

By the symmetry, we let $d(u_1, C^*_q)\geq |C^*_q|/2-1$. Note that
$d(x^*, C^*_q)+d(y^*, C^*_q)\leq |C_q|$. It follows from Claim 5
that $|C_q|-3\leq d(u_2, C^*_q)+d(u_{s-1}, C^*_q)\leq|C_q|$.

First, we assume that $|C_q|\geq 8$. If $d(u_2, C^*_q)+d(u_{s-1},
C^*_q)\geq |C_q|-1$ and $d(u_1, C^*_q)=|C^*_q|/2$, then $d(u_s,
C^*_q)=|C^*_q|/2-2\geq 2$ and there exists at least one vertex $v_x$
in $V(C^*_q)\cap X$ such that $u_2v_x,u_sv_x\in E(G)$. So we can
obtain that $G_2+v_x-u_1$ contains a subdivision of $C_k$ and
$C^*_q+u_1-v_x$ contains a subdivision of $C_q$. If $d(u_2,
C^*_q)+d(u_{s-1}, C^*_q)\geq |C_q|-1$ and $d(u_1,
C^*_q)=|C^*_q|/2-1\geq3$, then $d(u_s, C^*_q)=|C^*_q|/2-1\geq 3$. By
the symmetry we let $d(u_2, C^*_q)=|C_q|/2$. Then there exists at
least one vertex $v_x$ in $V(C^*_q)\cap X$ such that $u_2v_x,u_sv_x,
u_1v^-_x,u_1v^+_x\in E(G)$. It is easy to see that $G_2+v_x-u_1$
contains a subdivision of $C_k$ and $C^*_q+u_1-v_x$ contains a
subdivision of $C_q$. If $d(u_2, C^*_q)+d(u_{s-1}, C^*_q)=|C_q|-3$,
then by $|C_q|-3>|C^*_q|/2$, there exists at least an edge
$v_xv_y\in E(C^*_q)$ such that $u_2v_x,u_{s-1}v_y\in E(G)$. By Claim
5, we can know that $d(x^*, C^*_q)+d(y^*, C^*_q)=|C_q|$. It follows
from the choices of $x^*,y^*$ in $C^*_p$ that $G_2+v_x+v_y-u_1-u_2$
contains a subdivision of $C_k$, $C^*_p+u_1+u_2-x^*-y^*$ contains a
subdivision of $C_p$ and $C^*_q+x^*+y^*-v_x-v_y$ contains a
subdivision of $C_q$. If $d(u_2, C^*_q)+d(u_{s-1}, C^*_q)=|C_q|-2$,
then by Claim 5 we have $d(x^*, C^*_q)+d(y^*, C^*_q)\geq |C_q|-1$.
Since $|C_q|-2-|C^*_q|/2\geq 2$, we can get that there exists at
least an edge $v_xv^+_x\in E(C^*_q)$ such that $u_2v_x,u_{s-1}v^+_x,
v^-_xx^*, v^{++}_xy^*\in E(G)$. Further, we have $d(x^*,
C^*_q-v_x-v^+_x)+d(y^*, C^*_q-v_x-v^+_x)\geq |C^*_q-v_x-v^+_x|-1$.
Thus we can obtain that $G_2+v_x+v^+_x-u_1-u_2$ contains a
subdivision of $C_k$, $C^*_p+u_1+u_2-x^*-y^*$ contains a subdivision
of $C_p$ and $C^*_q+x^*+y^*-v_x-v^+_x$ contains a subdivision of
$C_q$.

Next, we assume that $|C_q|=6$. Let $C^*_q=v_1v_2v_3v_4v_5v_6v_1$
with $v_1\in X$.

Suppose that $d(u_2, C^*_q)+d(u_{s-1}, C^*_q)\geq |C_q|-1$ and
$d(u_1, C^*_q)=|C^*_q|/2=3$. If there exists a vertex $v_x$ in
$V(C^*_q)\cap X$ such that $u_2v_x,u_sv_x\in E(G)$, then we can
obtain that $G_2+v_x-u_1$ contains a subdivision of $C_k$ and
$C^*_q+u_1-v_x$ contains a subdivision of $C_q$. Note that $d(u_s,
C^*_q)=1$. So we may assume that $d(u_2, C^*_q)=|C_q|/2-1$ and
$N(u_2, C^*_q)\cap N(u_s,C^*_q)=\emptyset$. Without loss of
generality, we let $u_sv_1, u_2v_3, u_2v_5\in E(G)$. We claim  that
$v_1v_4\in E(G)$. If not, then $P+v_1-u_1$ is a path of order $|P|$
and $C^*_q+u_1-v_1$ contains a subdivision of $C_q$ with
$|C^*_q+u_1-v_1|=|C_q|$ and
$|E(G[V(C^*_q+u_1-v_1)])|>|E(G[V(C^*_q)])|$, which contradicts the
choice of $C^*_1, C^*_2, \ldots, C^*_{k-1}$ in (iii). By Claim 5 we
have $d(x^*, C^*_q)+d(y^*, C^*_q)\geq |C_q|-2=4$. If $y^*$ has at
least two neighbors in $C^*_q$, then by $d(u_1, C^*_q)=|C^*_q|/2,
d(u_{s-1}, C^*_q)=|C^*_q|/2$ and the choice of $y^*$ in $C^*_p$,
there exists a vertex $v_y$ in $C^*_q$ such that $G_2+v_y-u_s$
contains a subdivision of $C_k$, $C^*_p+u_s-y^*$ contains a
subdivision of $C_p$ and $C^*_q+y^*-v_y$ contains a subdivision of
$C_q$. So we have $d(x^*,C^*_q)=3$ and $d(y^*,C^*_q)=1$. We claim
that $x^*y^*\in E(G)$. If not, then $P+v_1-u_1$ is a path of order
$|P|$, $C^*_q+x^*-v_1$ contains a subdivision of $C_q$ with
$|C^*_q+x^*-v_1|=|C_q|$ and
$|E(G[V(C^*_q+x^*-v_1)])|=|E(G[V(C^*_q)])|$, and $C^*_p+u_1-x^*$
contains a subdivision of $C_p$ with $|C^*_p+u_1-x^*|=|C_p|$ and
$|E(G[V(C^*_p+u_1-x^*)])|>|E(G[V(C^*_p)])|$, which contradicts the
choice of $C^*_1, C^*_2, \ldots, C^*_{k-1}$ in (iii). If $v_1y^*\in
E(G)$, then we can get that $G_2+v_2+v_3-u_1-u_s$ contains a
subdivision of $C_k$, $C^*_p+u_1+u_s-x^*-y^*$ contains a subdivision
of $C_p$ and $C^*_q+x^*+y^*-v_2-v_3$ contains a subdivision of
$C_q$. The cases $v_3y^*\in E(G)$ and $v_5y^*\in E(G)$ can be proved
similarly.

Suppose that $d(u_2, C^*_q)+d(u_{s-1}, C^*_q)\geq |C_q|-1$ and
$d(u_1, C^*_q)=d(u_s, C^*_q)=|C^*_q|/2-1=2$. By the symmetry, we may
assume that $d(u_2, C^*_q)=|C_q|/2=3$ and $d(u_{s-1}, C^*_q)\geq
|C^*_q|/2-1=2$. Without loss of generality, we let
$u_1v_2,u_1v_6,u_{s-1}v_2\in E(G)$. If $u_sv_1\in E(G)$, then it is
easy to see that $G_2+v_1-u_1$ contains a subdivision of $C_k$ and
$C^*_q+u_1-v_1$ contains a subdivision of $C_q$. Thus, we consider
$u_sv_1\not\in E(G)$ and by $d(u_s, C^*_q)=|C^*_q|/2-1=2$, we have
$u_sv_3,u_sv_5\in E(G)$. We claim that $v_1v_4\in E(G)$. If not,
then $P+v_3+v_4-u_1-u_2$ is a path of order $|P|$ and
$C^*_q+u_1+u_2-v_3-v_4$ contains a subdivision of $C_q$ with
$|C^*_q+u_1+u_2-v_3-v_4|=|C_q|$ and
$|E(G[V(C^*_q+u_1+u_2-v_3-v_4)])|>|E(G[V(C^*_q)])|$, which
contradicts the choice of $C^*_1, C^*_2, \ldots, C^*_{k-1}$ in
(iii). It follows from Claim 5 that $d(x^*, C^*_q)+d(y^*, C^*_q)\geq
|C_q|-3=3$. If $v_1y^*$ or $v_5y^*\in E(G)$, then
$G_2+v_2+v_3-u_1-u_s$ contains a subdivision of $C_k$,
$C^*_p+u_s-y^*$ contains a subdivision of $C_p$ and
$C^*_q+y^*+u_1-v_2-v_3$ contains a subdivision of $C_q$. If
$v_3y^*\in E(G)$, then $G_2+v_1+v_2-u_1-u_s$ contains a subdivision
of $C_k$, $C^*_p+u_s-y^*$ contains a subdivision of $C_p$ and
$C^*_q+y^*+u_1-v_1-v_2$ contains a subdivision of $C_q$. So we let
$d(y^*,C^*_q)=0$ and $d(x^*,C^*_q)=3$. Then we can see that
$G_2+v_1+v_2-u_1-u_s$ contains a subdivision of $C_k$,
$C^*_p+u_1-x^*$ contains a subdivision of $C_p$ and
$C^*_q+x^*+u_s-v_1-v_2$ contains a subdivision of $C_q$.

Suppose that $|C_q|-3\leq d(u_2, C^*_q)+d(u_{s-1},
C^*_q)\leq|C_q|-2$. From Claim 5, we can obtain that $d(x^*,
C^*_q)+d(y^*, C^*_q)\geq|C_q|-1$. First, we assume that $d(x^*,
C^*_q)+d(y^*, C^*_q)=|C_q|$. If there exists an edge $v_xv_y$ in
$C^*_q$ with $u_2v_x,u_{s-1}v_y\in E(G)$, then $G_2+v_x+v_y-u_1-u_s$
contains a subdivision of $C_k$, $C^*_p+u_1+u_s-x^*-y^*$ contains a
subdivision of $C_p$ and $C^*_q+x^*+y^*-v_x-v_y$ contains a
subdivision of $C_q$. So, we consider $d(u_2, C^*_q)=3$ or
$d(u_{s-1}, C^*_q)=3$. By the symmetry, we let $d(u_2, C^*_q)=3$.
Note that $d(u_s,C^*_q)>0$. This implies that there exists a vertex
$v_x\in V(C^*_q)$ such that $u_2v_x,u_sv_x\in E(G)$. Then
$G_2+v_x-u_1$ contains a subdivision of $C_k$, $C^*_p+u_1-x^*$
contains a subdivision of $C_p$ and $C^*_q+x^*-v_x$ contains a
subdivision of $C_q$. Next, we assume that $d(x^*, C^*_q)+d(y^*,
C^*_q)=|C_q|-1=5$. It follows from Claim 5 that $d(u_2,
C^*_q)+d(u_{s-1}, C^*_q)=|C_q|-2=4$. Since $d(x^*, C^*_q)+d(y^*,
C^*_q)=|C_q|-1=5$, we have $d(x^*, C^*_q)=3$ or $d(y^*, C^*_q)=3$.
If $d(x^*, C^*_q)=3$, then $d(y^*, C^*_q)=2$. We claim that
$x^*y^*\in E(G)$. If not, then by $d(u_s,C^*_q)\geq 1$, say
$u_sv_1\in E(G)$, we can  get that $P+v_1-u_1$ is a path of order
$|P|$, $C^*_q+x^*-v_1$ contains a subdivision of $C_q$ with
$|C^*_q+x^*-v_1|=|C_q|$ and $|E(G[V(C^*_q+x^*-v_1)])|\geq
|E(G[V(C^*_q)])|$, and $C^*_p+u_1-x^*$ contains a subdivision of
$C_p$ with $|C^*_p+u_1-x^*|=|C_p|$ and
$|E(G[V(C^*_p+u_1-x^*)])|>|E(G[V(C^*_p)])|$, which contradicts the
choice of $C^*_1, C^*_2, \ldots, C^*_{k-1}$ in (iii). From $d(u_2,
C^*_q)+d(u_{s-1}, C^*_q)=|C_q|-2=4$, we can deduce that there exists
an edge $v_xv_y\in E(C^*_q)$ such that $u_2v_x,u_{s-1}v_y\in E(G)$
and $G_2+v_x+v_y-u_1-u_s$ contains a subdivision of $C_k$,
$C^*_p+u_1+u_s-x^*-y^*$ contains a subdivision of $C_p$ and
$C^*_q+x^*+y^*-v_x-v_y$ contains a subdivision of $C_q$. Now we let
$d(y^*, C^*_q)=3$. If $d(u_1,C^*_q)\geq 2$ or
$N(u_{s-1},C^*_q)\setminus N(u_1,C^*_q)\neq \emptyset$, then we can
obtain that there exists an edge $v_xv_y\in E(C^*_q)$ such that
$u_2v_x,u_{s-1}v_y\in E(G)$ and $G_2+v_x+v_y-u_1-u_s$ contains a
subdivision of $C_k$, $C^*_p+u_s-y^*$ contains a subdivision of
$C_p$ and $C^*_q+u_1+y^*-v_x-v_y$ contains a subdivision of $C_q$.
If $d(u_1,C^*_q)=d(u_{s-1},C^*_q)=1$ and there exists the vertex
$v_y\in V(C^*_q)$ with $u_1v_y,u_{s-1}v_y\in E(G)$, then  by
$d(u_s,C^*_q)=3$ we can get that $G_2+v_y-u_s$ contains a
subdivision of $C_k$ and $C^*_q+u_s-v_y$ contains a subdivision of
$C_q$.

The proof of Theorem 3 is complete. \hfill$\Box$

%

\end{document}